\documentclass{amsart}
\usepackage{amssymb}

    \usepackage[
    top    = 2.0cm,
    bottom = 1.8cm,
    left   = 2.54cm,
    right  = 2.54cm]    {geometry}

\usepackage{stmaryrd}
\usepackage{mathrsfs}
\usepackage{leftidx}
\usepackage{enumerate}



\newtheorem{theorem}{Theorem}[section]
\newtheorem{lemma}[theorem]{Lemma}
\newtheorem{corollary}[theorem]{Corollary}
\newtheorem{proposition}[theorem]{Proposition}
\theoremstyle{definition}

\theoremstyle{remark}

\numberwithin{equation}{section}


\newcommand{\FF}{{\mathbb{F}}}

\newcommand{\bN}{{\mathbf{N}}}

\newcommand{\Aut}{{\operatorname{Aut}}}

\newcommand{\Syl}{{\operatorname{Syl}}}

\newcommand{\GL}{{\operatorname{GL}}}

\newcommand{\GF}{\mbox{GF}}

\newcommand{\Out}{{\operatorname{Out}}}


\newcommand{\ug}{\ \raisebox{-.3em}{$\stackrel{\scriptstyle \leq}
{\scriptstyle \sim}$} \ }
\newcommand{\n}{{\mbox{\rm I$\!$N}}}
\newcommand{\X}{{\mbox{$\setminus$}\mbox{$\!\!\!/$}}}

\begin{document}

\title{Abelian quotients and orbit sizes of linear groups}

\author{Thomas Michael Keller}
\address{Department of Mathematics, Texas State University, 601 University Drive, San Marcos, TX 78666, USA.}

\author{YONG YANG}
\address{Key Laboratory of Group and Graph Theories and Applications, Chongqing University of Arts and Sciences, Chongqing, China and Department of Mathematics, Texas State University, 601 University Drive, San Marcos, TX 78666, USA.}

\makeatletter
\email{keller@txstate.edu, yang@txstate.edu}
\makeatother

\subjclass[2000]{20D99, 20C99}
\date{}


\begin{abstract}
Let $G$ be a finite group, and let $V$ be a completely reducible faithful finite $G$-module (i.e., $G\leq \GL(V)$, where $V$ is a finite vector space which is a direct sum of irreducible $G$-submodules). It has been known for a
long time that if $G$ is abelian, then $G$ has a regular orbit on $V$. In this paper we show that $G$ has an
orbit of size at least $|G/G'|$ on $V$.
This generalizes earlier work of the authors, where the same bound was proved
under the additional hypothesis that $G$ is solvable. For completely reducible modules it also strengthens
the 1989 result $|G/G'|<|V|$ by Aschbacher and Guralnick.

\end{abstract}

\maketitle
\Large
\section{Introduction} \label{sec:introduction8}

This paper is a sequel to \cite{keller-yang}. In that paper we proved that if $G$ is a finite solvable group and
$V$ is a completely reducible faithful $G$-module (possibly of mixed characteristic), then $G$ has an orbit of size
at least $|G/G'|$ on $V$, where $G'=[G,G]$ is the commutator subgroup of $G$. Following the standard reference \cite{manz-wolf} we say that $V$ is allowed to be
of {\it mixed characteristic} if $V=V_1 \oplus \dots \oplus V_t$ where each $V_i$ is an irreducible $G$-module over a finite field $\FF_i$ of characteristic $p_i$. This is just a slight generalization of the usual notion of completely reducible.\\

In the case that $G'=1$, i.e., $G$ is abelian, it has been known for sometime that the action of $G$ on $V$ has a regular orbit. This is a special case of a natural and intuitive generalization (see Theorem \ref{thm1} for a precise statement) that we expected to hold true for arbitrary groups. The proof, however, even just for solvable groups, turned out to be quite difficult and in \cite{keller-yang} we left
the problem for arbitrary finite groups as a conjecture.\\

In this paper we are able to prove this conjecture.  Our main result is the following.

\begin{theorem}\label{thm1}
Let $G$ be a finite group and $V$ a finite faithful completely reducible $G$-module,
possibly of mixed characteristic. Let $M$ be the largest orbit size in the action of $G$ on $V$.
Then
\[|G/G'|\leq M.\]
\end{theorem}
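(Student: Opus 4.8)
The plan is to proceed by induction on $|G|+\dim V$ and to reduce, as much as possible, to the solvable case already handled in \cite{keller-yang}. Since $V$ is completely reducible, write $V = V_1 \oplus \cdots \oplus V_k$ as a sum of irreducible $G$-modules; if $k \ge 2$ one hopes to split the problem using the subgroups $C_i = C_G(V_i)$, whose intersection is trivial by faithfulness. The first reduction is therefore to the case where $V$ is irreducible, or at least homogeneous, over each component; a standard device is to replace $G$ by $G/C_i$ acting on $V_i$, apply induction, and then reassemble the orbits via the Chinese-remainder style argument that an orbit on $V$ of size $\prod |x_i^{G}|$ exists when the $G$-actions on the $V_i$ are suitably independent. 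The genuinely new work is where faithful irreducibility forces $G$ to have a specific structure.

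**Handling the quasi-simple layer.** The essential difference from \cite{keller-yang} is the possible presence of a nonabelian composition factor, so the heart of the proof must analyze $E(G)$, the layer. Let $F^*(G) = F(G)E(G)$ be the generalized Fitting subgroup; since $V$ is faithful, $C_V(F^*(G)) = 0$. I would first dispose of the case $F^*(G) = F(G)$ (i.e. $G$ solvable-by-nothing of the relevant kind) by quoting the solvable result, so assume $E(G) \ne 1$. Write $E(G)$ as a central product of quasi-simple components $L_1, \ldots, L_t$ which $G$ permutes. The key observation is that $G/G'$ is a quotient of $G/E(G)$ (since components are perfect, $E(G) \le G'$), so it suffices to produce, \emph{inside} $V$, an orbit whose length is divisible by $|G/G'|$ — and here one wants to play off the orbit length of the $G$-action on a well-chosen vector against the index $|G : C_G(v)|$. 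The strategy is: pick a vector $v$ on which $E(G)$ already acts with large orbit (using known regular-orbit or large-orbit theorems for quasi-simple groups on their faithful modules, e.g. results in the spirit of Köhler–Pahlings / Liebeck / Fleischmann, which are available in the literature the paper builds on), arranged so that the stabilizer $G_v$ meets $E(G)$ trivially and maps into a complement; then $|v^G| \ge |E(G)| \cdot (\text{contribution from } G/E(G))$, and one bounds the latter contribution below by $|G/G'| / $ (something controlled).

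**The main obstacle: interaction between the solvable part and the layer.** I expect the hardest step to be controlling $C_G(F(G))$ versus $E(G)$ when $G$ has \emph{both} a nontrivial Fitting subgroup acting with small orbits and a layer — precisely the configuration where neither the purely solvable argument nor a purely quasi-simple argument suffices, and where the module $V$ may fail to be a tensor product matching the central product decomposition of $F^*(G)$. One must argue that a vector witnessing a large $F(G)$-orbit can be chosen compatibly with a vector witnessing a large $E(G)$-orbit, so that the two orbit-length contributions multiply rather than merely add; this typically requires a Clifford-theoretic analysis of $V$ as a module for $F^*(G) \rtimes (G/F^*(G))$ and a careful count showing the relevant stabilizer has index at least $|G/G'|$. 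A secondary technical point is the mixed-characteristic hypothesis: the decomposition $V = \bigoplus V_p$ over primes $p$ must be handled so that the orbit bounds over the different $V_p$ combine correctly, which is routine in spirit but needs the same independence lemma as in the first paragraph. If these compatibility issues can be pushed through — most plausibly by isolating a normal subgroup $N$ with $G/N$ solvable and $N$ a product of components, applying the solvable theorem to $G/N$ on $C_V(N)$ and a layer argument to $N$ on a complement, then multiplying — the theorem follows.
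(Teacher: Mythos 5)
Your proposal is a genuinely different route from the paper's, and it contains a gap that I do not think can be closed as stated. The paper never constructs a large orbit by combining an $E(G)$-contribution and an $F(G)$-contribution. Instead, the whole argument is organized around transferring the abelian-quotient bound to a \emph{nilpotent $p'$-subgroup} $T$ of $G$: after passing to $\bar G = G/Z$ with $Z = Z(E(G))$, one shows (Corollary~\ref{cor4}, which rests on Theorem~\ref{thm2} and the Aschbacher--Guralnick-style Lemma~\ref{lem1}) that $|\bar C:\bar C'|\le |F(\bar G):F(\bar G)'|$ for $\bar C = C_{\bar G}(E(\bar G))$, and then (Proposition~\ref{prop5}, built on the CFSG-based Proposition~\ref{prop1}) that the permutation/outer-automorphism action of $\bar G/\bar C$ on the components contributes an abelian quotient of order at most $|\bar A|$ for an abelian $p'$-subgroup $\bar A \le E(G)/Z$. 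Setting $\bar T = \bar A \times F(\bar G)$ and pulling back gives a nilpotent $T\le G$ with $(|T|,p)=1$ and $|G:G'|\le |T:T'|$; then the solvable theorem applied to $T$ (which acts faithfully and, being $p'$, completely reducibly on $V$) finishes. The crucial engine here is Proposition~\ref{prop1}: for every simple $L$ there are two \emph{abelian} subgroups of coprime orders inside $L$ each at least as large as $|K/K'|$ for any $K\le \Out(L)$. This is what lets the outer-automorphism contribution to $G/G'$ be traded for an abelian subgroup inside the layer, rather than a large orbit of the layer.

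The gap in your plan is precisely the step you flag as the ``main obstacle'': you want a vector $v$ whose $G$-orbit simultaneously sees both the $F(G)$-orbit contribution and an $E(G)$-orbit contribution, so that the two multiply. There is no general reason the relevant stabilizers are independent; $V$ need not decompose compatibly with the central product $F(G)E(G)$, and even where it does, the large-orbit theorems for quasi-simple groups you invoke are not in a form that lets you control $G_v\cap E(G)$ while also keeping $G_v\,F(G)/F(G)$ small. Your fallback --- take $N$ a product of components, apply the solvable theorem to $G/N$ on $C_V(N)$ and a layer argument on a complement --- fails already because $C_V(N)$ may be $0$ when $N$ acts fixed-point-freely, and because $G/N$ need not be solvable (a point you also gloss over when you say $F^*(G)=F(G)$ lets you ``quote the solvable result'': that equality only forces $E(G)=1$, not $G$ solvable, and the paper needs the nontrivial Theorem~\ref{thm2} to handle it). In short: the paper avoids the orbit-compatibility problem altogether by never asking $E(G)$ to produce an orbit, and your approach has not supplied a replacement for that idea.
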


The reader should compare this result to a 1989 result of Aschbacher and Guralnick \cite{aschbacher-guralnick}. They proved that if $V$ is a faithful $G$-module of characteristic $p$ for a finite group $G$ satisfying $\text{O}_p(G)=1$, then $|G/G'|<|V|$. Hence for completely reducible $V$, Theorem \ref{thm1} strengthens this bound.
Thus our result is a nice combination of the classical regular orbit result for abelian groups and the Aschbacher-Guralnick result for completely reducible modules.\\ 

Results on orbit sizes, particularly on the existence of large orbits, have been fundamental to solving problems in the modular and ordinary character theory of finite groups. Our main result in this paper fits in this category, and it has already been used in ~\cite{YY23} and ~\cite{Sambale}.

In ~\cite{YY23}, Qian and the second author use our result to prove a strengthening of the well-known result that the nilpotent length (or Fitting length) of a finite solvable group is bounded above by the number of its irreducible complex character degrees. In ~\cite{Sambale}, Sambale uses our result to prove a special case of the famous Brauer's $k(B)$-conjecture; namely he proves that it is true if $B$ is a block with abelian defect group $D$ and inertial quotient $T$ such that $|T'|\leq 4$. (This is a dual result to one he proved earlier where the hypothesis is that $T$ contains an abelian subgroup of index 4 instead of the condition that $|T'|\leq 4$.)


We now give an outline of the main ideas guiding our proof. In the solvable case, the main idea essentially was to show that $|G:G'|$ is bounded above by $|F(G):F(G)'|$, where $F(G)$ is the Fitting subgroup of $G$ (see Theorem ~\ref{thm2}). This allowed a reduction to nilpotent groups. Nilpotent groups could be further reduced to those of nilpotence class 2, which turns out to be the hardest case.
To achieve the generalization to arbitrary finite groups in the current paper, we try to emulate the
approach used in the solvable case. Here the generalized Fitting subgroup $F^*(G)$ plays the role of the
Fitting subgroup. Now $F^*(G)=F(G)E(G)$, where $E(G)$ is the layer of $G$, and one can think
of some part of $G/G'$ acting faithfully on $F(G)$ and the other part acting faithfully on $E(G)$.
We bound the first part by $|F(G):F(G)'|$ and the second part by the order of some abelian subgroup of $E(G)/Z(E(G))$. This portion combined with $F(G)$ yields a nilpotent subgroup $T$ of $G$ such that
$|G/G'|\leq |T/T'|$.  We then use the result for solvable groups to get the desired conclusion.
To that end, in Section 2 we prove that if $C$ is the centralizer in $G$ of the factor group $E(G)$ mod
its center, then $|C:C'|\leq |F(G):F(G)'|$.  This takes care of the above-mentioned first part
of $|G/G'|$. We deal with the second part in Section 4, with Section 3 providing
some number theoretical results that we need.
A major ingredient here is a result on finite simple groups which guarantees the existence of abelian subgroups of sufficiently large order. Namely, we need their order to be larger than the
order of the abelian quotient of the outer automorphism group of the group (see Section 4). The proof of this result goes through CFSG and is admittedly quite tedious, but
its usefulness makes up for that. The proof in Section 4 also uses a refinement of a result by Feit \cite{feit} on large Zsigmondy prime divisors (see Section 3). Finally, in Section 5 we put the two pieces together to prove the main result.\\

\section{Abelian quotients and the Fitting subgroup} \label{sec:1}

We need the following generalization of \cite[Theorem 3]{aschbacher-guralnick}.

\begin{lemma}\label{lem1}
Let $G$ be a finite group and $V$ a finite, faithful $G$-module,
possibly of mixed characteristic. Suppose that $V=F^*(GV)$. Then
\[|G:G'|<|V|.\]
\end{lemma}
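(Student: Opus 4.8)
The plan is to induct on the number of distinct prime divisors of $|V|$, so as to reduce to the single‑characteristic situation, where the statement is precisely \cite[Theorem 3]{aschbacher-guralnick}. For the base case, suppose $|V|$ is a power of a single prime $p$, so that $V$ is a faithful $\mathbb{F}_p G$‑module. Since $V$ is an abelian normal subgroup of $GV$ we have $V\le F(GV)$, so $F^*(GV)=V$ forces $F(GV)=V$ and hence $O_p(GV)=V$. Then $VO_p(G)$ is a normal $p$‑subgroup of $GV$, so $VO_p(G)\le O_p(GV)=V$ and therefore $O_p(G)\le V\cap G=1$. Now \cite[Theorem 3]{aschbacher-guralnick} applied to $G$ acting on $V$ gives $|G:G'|<|V|$.

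For the inductive step, assume $|V|$ has at least two prime divisors, fix one of them, $p_1$, and write $V=V_1\times W$, where $V_1$ is the Sylow $p_1$‑subgroup of $V$ and $W$ is its Hall complement; both are nontrivial $G$‑submodules. Put $C=C_G(V_1)$. The first task is to check that the hypothesis descends to the pair $(C,W)$. Here $C$ acts faithfully on $W$, since $C_C(W)=C_G(V_1)\cap C_G(W)=C_G(V)=1$. Moreover $CW\trianglelefteq GV$: indeed $C\trianglelefteq G$ and $W\trianglelefteq GV$, and since $C$ centralises $V_1$ one has $[C,V]=[C,W]\le W$, so $V$ normalises $CW$. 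Because $CW$ is normal in $GV$, every characteristic subgroup of $CW$ is normal in $GV$; applying this to $O_q(CW)$ for each prime $q$ gives $O_q(CW)\le O_q(GV)\le F(GV)=V$, hence $O_q(CW)\le CW\cap V=W$ (by Dedekind, as $C\cap V=1$), so $F(CW)=W$; likewise $E(CW)\le E(GV)=1$, since $F^*(GV)=V$ is abelian. Thus $W=F^*(CW)$, and as $|W|$ has fewer prime divisors than $|V|$ the inductive hypothesis yields $|C:C'|<|W|$.

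Next, $G/C$ acts faithfully on the $\mathbb{F}_{p_1}$‑module $V_1$; granting that $O_{p_1}(G/C)=1$, \cite[Theorem 3]{aschbacher-guralnick} gives $|G/C:(G/C)'|<|V_1|$, that is, $|G:CG'|<|V_1|$. Combining the two estimates along the chain $G'\le CG'\le G$,
\[
|G:G'|=|G:CG'|\cdot|CG':G'|\le|G/C:(G/C)'|\cdot|C:C'|<|V_1|\,|W|=|V|,
\]
where $|CG':G'|=|C:C\cap G'|\le|C:C'|$ because $C'\le G'$. This closes the induction.

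The main obstacle is the claim $O_{p_1}(G/C_G(V_1))=1$. One expects it to follow from $F^*(GV)=V$, but it is not simply a consequence of $O_{p_1}(GV)=V_1$: the Sylow $p_1$‑subgroup of the preimage in $G$ of $O_{p_1}(G/C)$ need not be normal in $GV$, so one does not immediately obtain a normal $p_1$‑subgroup of $GV$ properly containing $V_1$. I would attack this either by strengthening the inductive hypothesis so that the quotient $G/C_G(V_1)$ is automatically admissible, or by passing to the $G$‑invariant submodules $C_{V_1}(N)$ and $V_1/[V_1,N]$ — where $N$ denotes the preimage in question — and running a secondary induction there, or, failing that, by revisiting the corresponding step in the Aschbacher–Guralnick argument and verifying that it applies verbatim to $G/C$ acting on $V_1$.
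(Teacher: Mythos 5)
You have put your finger on the right spot: the missing claim $O_{p_1}\bigl(G/C_G(V_1)\bigr)=1$ is not merely hard to verify --- it is false in general, so your option 3 (checking that Aschbacher--Guralnick applies verbatim to $G/C$ on $V_1$) cannot work. For a concrete counterexample, take $G=C_6$, let $V_1=\GF(2)^2$ with $G$ acting through the quotient $C_2$ (the involution swapping the two coordinates, the $C_3$ acting trivially), and let $W=\GF(7)$ with $G$ acting faithfully via an isomorphism $C_6\cong\GF(7)^{\times}$. Then $V=V_1\oplus W$ is a faithful $G$-module, and a direct check gives $O_2(GV)=V_1$, $O_3(GV)=1$, $O_7(GV)=W$; since $GV$ is solvable, $E(GV)=1$ and therefore $F^*(GV)=F(GV)=V$. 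Nevertheless $C_G(V_1)\cong C_3$ and $O_2\bigl(G/C_G(V_1)\bigr)\cong C_2\neq 1$. The conclusion of the lemma still holds here ($6<28$), but it cannot be reached by applying \cite[Theorem 3]{aschbacher-guralnick} to $G/C_G(V_1)$ on $V_1$, because that theorem's hypothesis genuinely fails.

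The paper's proof uses the same overall strategy you set out --- filter $V$ and apply the Aschbacher--Guralnick bound to the successive pieces --- but with a refinement that eliminates exactly this obstruction. Rather than splitting $V$ into Sylow subgroups, the paper builds a chain $0=U_0<U_1<\cdots<U_n=V$ of $G$-submodules, processing one prime of $|V|$ at a time, in which every factor $W_{i+1}=U_{i+1}/U_i$ is an \emph{irreducible} $G$-module over $\GF(r)$ for the relevant prime $r$. For an irreducible faithful module one gets $O_r\bigl(G/C_G(W_{i+1})\bigr)=1$ for free, and since $H_i:=G_i/G_{i+1}$ (with $G_i=C_G(W_1\oplus\cdots\oplus W_i)$) is isomorphic to a normal subgroup of $G/C_G(W_{i+1})$, it follows that $O_r(H_i)=1$ as well. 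Aschbacher--Guralnick then gives $|H_i:H_i'|<|W_{i+1}|$, and a multiplicativity lemma from \cite{keller-yang} combines these along the normal chain $G=G_0\geq G_1\geq\cdots\geq G_n$ to yield $|G:G'|\leq\prod_i|H_i:H_i'|<\prod_i|W_{i+1}|=|V|$, once one has shown $G_n=1$ --- and it is precisely in proving $G_n=1$ that the hypothesis $F^*(GV)=V$ is used. In effect this is your option 2 carried to its natural endpoint: instead of a secondary induction inside the Sylow piece, one simply takes the filtration fine enough from the outset that the $O_r=1$ hypothesis becomes automatic at every step.
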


\begin{proof}
Let $G$ and $V$ be a counterexample such that $|GV|$ is minimal. Now
assume that $G$ acts trivially on some Sylow $p$-subgroup $X$ of $V$
for some prime $p$ dividing $|V|$. Then $V=X\times Y$ for a Hall $p'$-subgroup $Y$
of $V$, and $G$ acts faithfully on $Y$ such that
\[Y=F^*(GY).\]

Since $|GY|<|GV|$, then $G$ and $Y$ do not form a counterexample to our statement,
hence we conclude that
\[|G:G'|<|Y| < |V|\]
and we are done. Thus we may assume that $G$ acts nontrivially on every
Sylow subgroup of $V$ (for every prime dividing $|V|$). \\

Write $V=P_1\oplus\cdots\oplus P_l$ for some
$l\in\n$, where $P_i\in\Syl_{p_i}(V)$ for the distinct primes $p_i$
dividing $|V|$.\\

We now use an inductive process to define an ascending chain of
$G$-submodules $V_i$ of $V$ ($i=0,\dots, n$) (where
$n$ is a positive integer determined by the process), and simultaneously
we define a descending chain of subgroups $G_i$ of $G$ ($i=0,\dots, n$) and again
simultaneously $G$-modules $W_i$ ($i=0,\dots, n$) and integers $0=t_0<t_1<\cdots <t_l=n$
such that
\[V_{t_{i+1}}/V_{t_i}\cong
W_{t_i+1}\oplus\cdots\oplus W_{t_{i+1}}\cong P_{i+1}\] (where "$\cong$"
just stands for isomorphism as groups, not as $G$-modules) for
$i=0,\ldots,l-1$.\\

That is, we define
\[0=V_0<V_1<\cdots<V_n=V\]
for some $n\in\n$ and $G_i$ ($i=1,\ldots,n$) as follows:\\

Let $G_0=G$, $V_0=0$, and $W_0=0$, and $t_0=0$. Now suppose that the integer
$t_i$ is already defined for some $i\in\{0,\dots, l-1\}$, and that for some
integer $j\geq 0$ also $G_{t_i+j},V_{t_i+j}, W_{t_i+j}$ are already defined and that
$G_{t_i+j} = C_G(W_1\oplus\dots\oplus W_{t_i+j})$ \\

(+) First consider the case that $p_{i+1}$ divides $|V/V_{t_i+j}|$.\\

Then let $V_{t_i+j+1}$ be such that $V_{t_i+j}<V_{t_i+j+1}\leq V$ and
$W_{t_i+j+1}=V_{t_i+j+1}/V_{t_i+j}$ is an irreducible $G$-module over $\GF(p_{i+1})$.
Let $G_{t_i+j+1}=C_{G_{t_i+j}}(W_{i+1})\leq G_{t_i+j}$. Observe that
$G_{t_i+j+1}=C_G(W_1\oplus\cdots\oplus W_{t_i+j+1})\unlhd G$.\\

So in this case we have defined $G_{t_i+j+1},V_{t_i+j+1}$, and $W_{t_i+j+1}$ in the desired way.\\

Now we consider the other case, namely that $p_{i+1}$ does not divide $|V/V_{t_i+j}|$.\\
In this case we have completed the prime $p_{i+1}$ and put $t_{i+1}=t_i+j$. Note that
then $G_{t_{i+1}} = C_G(W_1\oplus\dots\oplus W_{t_{i+1}})$.
If $i+1=l$, then we have defined all the modules and subgroups we need; so we
let $n=t_i+j$ and stop the process. If $i+1<l$, then we move to the next prime $p_{i+2}$
with the next iteration (i.e., we go to (+) again).\\

Next we claim that for $G_n=C_G(W_1\oplus\cdots\oplus W_n)$ we
have
\[(*)\quad G_n=1.\]

Now observe that
$\bigcap\limits_{i=1}^lC_G(P_i)=1$, and so
\[G_n\ug\X_{i=1}^lG_n/C_{G_n}(P_i).\]
By \cite[Corollary 5.3.3]{gorenstein} we know that
$G_n/C_{G_n}(P_i)$ is a $p_i$-group. Thus $G_n$ is nilpotent of
order having only prime divisors from $V$. Moreover, if $x\in G_n$
is of $p_i$-power order for some $i\in\{1,\ldots,l\}$, then $x$ acts
trivially on $P_j$ for all $j\not=i$, because $xC_{G_n}(P_j)\in
G_n/C_{G_n}(P_j)$ and the latter is a $p_j$-group. This shows that
$G_nV$ is nilpotent, and as $G_nV\unlhd GV$, by our hypothesis we
conclude that
\[G_nV\leq F^*(GV)=V,\]
so $G_n=1$ follows and $(*)$ is proved.\\

Next we fix $i\in\{0,\ldots,n-1\}$ and consider the action of
$H_i:=G_i/G_{i+1}$ on $W_{i+1}$. Let $r$ be the characteristic of
$W_{i+1}$. Since $W_{i+1}$ is an irreducible faithful
$G/C_G(W_{i+1})$-module, we have that
\[O_r(G/C_G(W_{i+1}))=1.\]

Since \[H_i=G_i/C_{G_i}(W_{i+1})=G_i/(C_G(W_{i+1})\cap G_i)\cong
G_iC_G(W_{i+1})/C_G(W_{i+1})\unlhd G/C_G(W_{i+1}),\] we see that
also $O_r(H_i)=1$.\\

Hence by \cite[Theorem 3]{aschbacher-guralnick} we
conclude that $|H_i:H_i'|<|W_{i+1}|$.\\

Now by \cite[Lemma 2.1]{keller-yang} and since $G_n=1$, we have that
\[|G:G'|\leq\prod_{i=0}^{n-1}|H_i:H_i'|<\prod_{i=0}^{n-1}|W_{i+1}|=|V|.\]
This concludes the proof of the lemma.
\end{proof}

\begin{theorem}\label{thm2}
Let $G$ be a finite group with $F^*(G)=F(G)$. Then
\[|G:G'|\leq|F(G):F(G)'|.\]
\end{theorem}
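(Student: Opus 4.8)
The plan is to reduce the inequality $|G:G'|\le|F(G):F(G)'|$ to an application of Lemma \ref{lem1} by manufacturing, out of the abstract hypothesis $F^*(G)=F(G)$, a concrete faithful module on which the lemma bites. Write $F=F(G)$. Since $F^*(G)=F$, the generalized Fitting subgroup is solvable and nilpotent, so $C_G(F)=Z(F)\le F$; this is the key structural fact I would invoke first. The natural module to build is $V=F/\Phi(F)$, or more precisely the direct sum of the chief-like layers obtained by quotienting each Sylow subgroup of $F$ by its Frattini subgroup. Because $F$ is nilpotent, $F/\Phi(F)$ is an abelian group of squarefree exponent componentwise, i.e. a module over $\prod_{p\mid|F|}\GF(p)$, and $G$ acts on it by conjugation. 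The first substantive step is to check that this action is faithful: the kernel centralizes $F/\Phi(F)$, hence (by a standard coprime/Frattini argument, e.g.\ \cite[Corollary 5.3.3]{gorenstein} applied Sylow-by-Sylow) centralizes $F$ itself, and by $C_G(F)\le F$ lands inside $F$; one then argues this kernel is trivial or can be absorbed, so that $G$ — or rather the relevant quotient — acts faithfully on $V$.

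Second, I would verify the module hypothesis of Lemma \ref{lem1}, namely $V=F^*(GV)$ where now $V$ is regarded as an elementary-abelian-by-characteristic normal subgroup of the semidirect product $GV$. Here $V\cong F/\Phi(F)$ is abelian and normal in $GV$, and the point is that $G$ acts on $V$ with no nontrivial normal $r$-subgroup fixed pointwise for any prime $r$ — equivalently $C_{GV}(V)=V$ — which is exactly the content of $F^*(GV)=V$. This should follow from the faithfulness established in the previous step together with $C_G(F)\le F$: any normal nilpotent subgroup of $GV$ above $V$ would, modulo $V$, centralize $V=F/\Phi(F)$ and hence centralize $F$, forcing it into $F$ and then into $V$. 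So Lemma \ref{lem1} gives $|G:G'|<|V|=|F/\Phi(F)|$, or in the worst case where $G$ has to be replaced by a quotient, $|G/K:(G/K)'|<|V|$ for the kernel $K$ of the action.

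Third, I must convert $|F/\Phi(F)|$ back into $|F:F'|$. This is not an equality — $F'$ can be strictly smaller than $\Phi(F)$ — so the crude bound $|G:G'|<|F/\Phi(F)|$ is too weak as stated and must be upgraded. The fix is to not collapse all of $F$ to $F/\Phi(F)$ at once, but to use the finer module $V=F/F'$: since $F$ is nilpotent, $F/F'$ is abelian, $G$ acts on it by conjugation, and the same faithfulness and $F^*$-arguments apply with $F'$ in place of $\Phi(F)$ (the relevant fact being $C_G(F/F')\cap\text{(something)}\le F$, which again reduces via coprimality to $C_G(F)\le F$ since $F'\le\Phi(F)$). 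Then Lemma \ref{lem1} yields $|G:G'|\le|F:F'|$ directly — with the non-strict inequality recovered exactly in the boundary case where $G=F$ is already nilpotent, which must be handled separately by observing that then $|G:G'|=|F:F'|$ trivially.

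The main obstacle I anticipate is precisely this last issue of getting the sharp, non-strict bound $|G:G'|\le|F:F'|$ rather than the strict-but-coarser $|G:G'|<|V|$ for whatever module $V$ one chooses, since the lemma's conclusion is strict. Relatedly, one must be careful that the action of $G$ on $F/F'$ need not be faithful (its kernel contains $F$ itself whenever $F$ is nonabelian-acting-trivially-on-its-abelianization, which does happen), so the honest statement coming out of Lemma \ref{lem1} is about $G/C_G(F/F')$; extracting $|G:G'|\le|F:F'|$ from that requires the inclusion $C_G(F/F')\le F$ up to the relevant pieces, and pinning down exactly why the centralizer of $F/F'$ in $G$ cannot push $|G:G'|$ above $|F:F'|$ is the delicate point. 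I would expect the solvable-group case of this theorem to be already available (it is essentially in \cite{keller-yang}), so the real work is checking that none of those arguments used solvability of $G$ — only $F^*(G)=F(G)$ and the facts about nilpotent groups — which is plausible since the whole engine here, Lemma \ref{lem1}, makes no solvability assumption.
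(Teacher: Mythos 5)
Your overall strategy --- reduce the abstract inequality to an application of Lemma~\ref{lem1} by manufacturing a faithful module from $F=F(G)$ --- is the right idea, and you correctly flag the two obstructions: the module $F/F'$ is \emph{never} faithful for $G$ when $F$ is nonabelian (since $F$ itself centralizes $F/F'$), and you would only get a bound of the form $|G/C:(G/C)'|<|F:F'|$ for a quotient, leaving unclear how to account for the contribution from $C$. But you stop at naming those obstacles rather than resolving them, and the resolution requires two ideas you do not have.

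First, the paper does an \emph{induction on $\Phi(F(G))$}: if $N=\Phi(F(G))>1$ one passes to $G/N$ (using $F^*(G/N)=F(G)/N$) and combines the inductive bound with $F(G)'\le N\cap G'$ to finish. This lets one assume $\Phi(F(G))=1$, so that $F$ is \emph{abelian} of squarefree exponent and $|F:F'|=|F|$. This step entirely sidesteps your worry about whether to use $F/\Phi(F)$ or $F/F'$ --- after the reduction they coincide with $F$ itself. Second, and this is the essential point you are missing, the correct module is not $F/F'$ but rather $F(G')=F\cap G'$, regarded as a module for $G/F$ (which is where the faithfulness can actually be salvaged since $C_G(F)=F$). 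One observes that $G/F$ acts trivially on $F/F(G')$ (because $[F,G]\le F\cap G'$), so the entire nontrivial content of the $G/F$-action on $F$ is captured by the submodule $F(G')$; the paper then proves that $G/F$ acts \emph{faithfully} on $F(G')$ and that $F(G')=F^*(H)$ for $H=(G/F)\ltimes F(G')$, so Lemma~\ref{lem1} yields $|G:G'F|=|H:H'|\le|F(G')|$. The bookkeeping then closes as
\[
|G:G'|=|G:G'F|\cdot|G'F:G'|\le|F(G')|\cdot|F:F\cap G'|=|F\cap G'|\cdot|F:F\cap G'|=|F|=|F:F'|.
\]
Your proposal never discovers this decomposition of $|G:G'|$ into two factors nor the choice of $F\cap G'$ as the module; with $V=F/F'$, even granting faithfulness of $G/F$ on $V$, Lemma~\ref{lem1} would give $|G:G'F|<|F:F'|$, which multiplied by $|G'F:G'|=|F:F\cap G'|$ overshoots the target whenever $F\not\le G'$. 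Also note that the faithfulness of $G/F$ on $F(G')$ and the equality $F^*(H)=F(G')$ are not automatic --- the paper devotes a careful argument to each (the ``$K=F(G)$'' computation and the nilpotent-normal-subgroup contradiction), and your coprimality/Frattini sketch does not substitute for those.
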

\begin{proof}
We let $G$ be a counterexample of minimal order. Clearly $|G|>1$. Put
$N=\Phi(F(G))$ (where $\Phi(U)$ denotes the Frattini subgroup of a group $U$). By \cite[Lemma 3.16(a)]{gls} we have
\[(1)\quad F^*(G/N)=F^*(G)/N=F(G)/N=F(G/N).\]
So if $N>1$, we may apply the induction hypothesis which yields that
\[|G:G'N|=|(G/N):(G/N)'|\leq|F(G/N):F(G/N)'|=|F(G)/N:(F(G)/N)'|,\]
where the last equality follows from (1). Now as $F(G)/N$ is
abelian, it follows that
\[(2)\quad |G:G'N|\leq|F(G)/N|.\]

Next observe that $F(G)'\leq N\cap G'$. Hence we have
\[(3)\quad\frac{|N|}{|F(G)'|}\geq\frac{|N|}{|N\cap G'|}=|N:N\cap
G'|=|NG':G'|.\]

Putting (2) and (3) together gives
\[|G:G'|=|G:G'N||G'N:G'|\leq|F(G)/N|\cdot\frac{|N|}{|F(G)'|}=|F(G):F(G)'|,\]
and we are done in the case that $N>1$.\\

Hence from now on we may assume that $N=1$. Also, $F(G)$ is abelian of squarefree exponent, and
$C_G(F(G))=F(G)$, so $G/F(G)$ acts faithfully on $F(G)$. We have to
show that $|G:G'|\leq |F(G)|.$\\

Now observe that $F(G')=F(G)\cap G'$. Let $F(G)\leq K\unlhd G$ be
such that $K/F(G)$ is the kernel of the action of $G/F(G)$ on
$F(G')$. Our next goal is to show that $K=F(G)$.\\

Write
\[V_0=V=F(G) \ \mbox{ and }V_1=F(G')\]
(reading $V$ as a $G/F(G)$-module of possibly mixed
characteristic).\\

Note that $H:=G/F(G)$ acts trivially on $V/V_1$, because if $g\in G$
and $x\in F(G)$, then $x^g=x[x,g]$ where $[x,g]\in F(G)\cap
G'=F(G')$, so that
\[(xF(G'))^g=xF(G').\]

Thus $H$ acts faithfully on $V$ and acts trivially on both $V/V_1$ and
$V_1$. Let $\Pi$ be the set of common prime divisors of $|V/V_1|$
and $|V_1|$. If $\Pi=\emptyset$, then it is clear that $K=F(G)$.\\

Let $\Pi\not=\emptyset$. If $p\in\Pi$ and $P\in\Syl_p(V)$, then
either $K\leq C_G(P)$ or $K/C_K(P)$ is a $p$-group by
\cite[Corollary 5.3.3]{gorenstein}. Now let $V=X\oplus Y$, where $X$
is the Hall $\Pi$-subgroup of $V$, and $Y$ is the Hall
$\Pi'$-subgroup of $V$. Clearly $K$ acts trivially on $Y$, so as
$K/F(G)$ acts faithfully on $V$, we see that $K/F(G)$ acts
faithfully on $X$. Let $p_i$ ($i=1,\ldots,n$ for some $n\in\n$) be
the distinct prime numbers in $\Pi$, so that
\[X=P_1\times\ldots\times P_l\]
for $P_i\in\Syl_{p_i} (X)$ for $i=1,\ldots,l$. Write $\bar{K}=K/F(G)$.
Then
\[\bar{K}\ug\X_{i=1}^l\bar{K}/C_{\bar{K}}(P_i),\]
and from the above we know that $\bar{K}/C_{\bar{K}}(P_i)$ is a
$p_i$-group for all $i$. This shows that $\bar{K}$ is nilpotent of
order divisible only by primes from $\Pi$. Moreover, any
$p_i$-element of $\bar{K}$ can only act nontrivially on $P_i$ and
must act trivially on $P_j$ for all $j\not=i$, as
$\bar{K}/C_{\bar{K}}(P_j)$ is a $p_j$-group. This shows that
$\bar{K}F(G)$ is nilpotent. So if $\bar{K}>1$, then we may assume
that $\bar{Q_1}\in\Syl_{p_1}(\bar{K})$ is nontrivial, and then
$\bar{Q_1}\unlhd G/F(G)$. If $Q_1\leq G$ is the inverse image of
$\bar{Q_1}$ in $G$, then we easily see that $F(G)<Q_1$, and $Q_1$ is
a nilpotent normal subgroup of $G$ (since $\bar{Q_1}$ and thus also
the Sylow $p_1$-subgroup of $Q_1$ centralizes the Hall
$p_1'$-subgroup of $F(G)$), contradicting the definition of
$F(G)$.\\

Thus $\bar{K}=1$, and hence $K=F(G)$, as desired. Since $K=F(G)$, we
now know that $G/F(G)$ acts faithfully on $F(G')$, and $F(G')$ can
be viewed as a faithful $G/F(G)$-module of possibly mixed
characteristic. Moreover, if we look at the semidirect product $H$
of $G/F(G)$ and $F(G')$ with respect to the natural action, then we claim that
\[F(G')=F^*(H).\]
To prove the claim, note that clearly $F^*(H)=F(H)$, and so it suffices to show that
$F(H)=F(G')$. We prove this by contradiction. Assume that $F(H)>F(G')$. Then
$F(H)/F(G')\cong G/F(G)$ contains a normal $p$-subgroup
$\bar{S}>1$ for some prime $p$ such that $\bar{S}$ acts
trivially on the Hall $p'$-subgroup of $F(G')$ and also - since $G$
acts trivially on $F(G)/F(G')$ - on the Hall $p'$-subgroup of
$F(G)/F(G')$. Hence (again by \cite[Corollary 5.3.3]{gorenstein})
$\bar{S}$ acts trivially on the Hall $p'$-subgroup of $F(G)$,
and thus if $S\leq G$ with $F(G)\leq S$ is the inverse image of
$\bar{S}$ in $G$, then $S\unlhd G$ is nilpotent, contradicting
$F(G)<S$.\\

So indeed $F(G')=F^*(H)$, and hence we may apply Lemma \ref{lem1} to $H$. This yields
\[|G:G'F(G)|=|H:H'|\leq|F(G')|.\]
Thus altogether
\begin{eqnarray*}
|G:G'|&=&|G:G'F(G)||G'F(G):G'|\\
     &\leq &|F(G')|\cdot|F(G):(F(G)\cap G')|\\
     &=&|F(G')|\cdot|F(G):F(G')|\\
     &=&|F(G)|,
\end{eqnarray*}
which completes the proof of the theorem.
\end{proof}

\begin{corollary}\label{cor3}
Let $G$ be a finite group and $C=C_G(E(G)/Z(E(G)))$. Then $C=C_G(E(G))$ and
\[|C:C'|\leq |F(G):F(G)'|.\]
\end{corollary}
\begin{proof}
First note that since $E(G)$ is perfect, from \cite[Lemma 3.8]{gls} it immediately follows that
$C=C_G(E(G))$. With this we observe that
$F^*(C)=F(C)=F(G)$, as is well-known (see e.g.
\cite[Lemma 3.13(i)]{gls}). Therefore by Theorem \ref{thm2} we conclude that
\[|C:C'|\leq|F(C):F(C)'|=|F(G):F(G)'|,\]
and we are done.
\end{proof}

\section{Number Theory Results} \label{sec:3}

Let $a$ and $m$ be integers greater than 1. A Zsigmondy prime for $(a,m)$ is a prime $l$ such that $l \mid a^m-1$ but $l \nmid a^i-1$ for $1 \leq i \leq m-1$. A well-known theorem of Zsigmondy asserts that Zsigmondy primes exist except if $(a,m)=(2,6)$ or $m=2$ and $a=2^k-1$. If a Zsigmondy prime $l$ exists for $(a,m)$, we will say that there exists a Zsigmondy prime $l$ for $a^m-1$. If a Zsigmondy prime $l$ exists for $(a,2m)$, since $a^{2m}-1=(a^m-1)(a^m+1)$ and $l \nmid a^m-1$, we have that $l \mid a^m+1$. Thus for convenience, we will say that there exists a Zsigmondy prime $l$ for $a^m+1$.

Observe that if $l$ is a Zsigmondy prime for $(a,m)$, then $a$ has order $m$ modulo $l$ and so $l \equiv 1 \pmod{m}$. Thus $l \geq m+1$. A prime $l$ is a large Zsigmondy prime for $(a,m)$ if $l$ is a Zsigmondy prime for $(a,m)$ and either $l \geq 2m+1$ or $l^2 \mid a^m-1$. Walter Feit ~\cite[Theorem A]{feit} introduced the notion of large Zsigmondy prime and proved that large Zsigmondy primes exist except for a small list of exceptional cases. In this paper, we will use Feit's result along with some variations of Feit's result. Also, note that ~\cite[Theorem 2.2]{NP}
has some more information on large Zsigmondy primes.

\begin{lemma}\label{feitcor}
If $q$ is a prime power and $m$ is an integer greater than $2$, then there exists a large Zsigmondy prime $l$ for $q^m-1$ except in the following cases.
\begin{itemize}
\item $q=2$ and $m=4,6,10,12,18$.
\item $q=3$ and $m=4,6$.
\item $q=5$ and $m=6$.
\end{itemize}
\end{lemma}
\begin{proof}
This follows immediately from ~\cite[Theorem A]{feit}.
\end{proof}

\begin{lemma}\label{feitcorprime}
If $q$ is a prime power and $m$ is an integer greater than $2$, then there exists a prime $l \mid q^m-1$ such that either $l \geq 2m+1$ or $l^2 \mid q^m-1$ (and thus $l^2 > 2m+1$) except in the following cases.
\begin{itemize}
\item $q=2$ and $m=4,6,12$.
\item $q=3$ and $m=4$.
\end{itemize}
\end{lemma}
\begin{proof}
This follows immediately from ~\cite[Theorem A]{feit}. We observe that if $l^2 \mid q^m-1$, then since $l \geq m+1$, we have that $l^2 \geq (m+1)^2 = m^2+2m+1 > 2m+1$.
\end{proof}

\begin{lemma}\label{largercorprime}
If $q$ is a prime power and $m$ is an integer greater than $2$, then there exists a prime $l \mid q^m-1$ such that either $l \geq 3m+1$ or $l^2 \mid q^m-1$ (and thus $l^2 > 3m+1$) except in the following cases.
\begin{itemize}
\item $q=2$ and $m=3,4,6,8,12,20$.
\item $q=3$ and $m=4$ or $6$.
\item $q=4$ and $m=6$.
\end{itemize}
\end{lemma}
\begin{proof}
This follows from the results in ~\cite{Glasby}. We observe that if $l^2 \mid q^m-1$, then since $l \geq m+1$, we have that $l^2 \geq (m+1)^2 = m^2+2m+1 > 3m+1$.
\end{proof}

\section{Outer automorphism groups of simple groups} \label{section4}

We first give the general idea with respect to the proof of the following result. For most finite simple groups of Lie type, we use the results in Section 3 to find two prime divisors $L_1$ and $L_2$ of $|G|$, which are coprime, such that there exist subgroups $H_1$, $H_2$ with $|H_i| = L_i$ satisfying the conditions required. In particular, $L_1$, $L_2 \geq |\Out(G)|$ which is enough if $\Out(G)$ is nontrivial. There are some exceptional cases when either the rank or the size of the finite field is small, where one cannot find suitable Zsigmondy primes. We may deal with those exceptional cases by checking the bounds by direct calculation or computationally with GAP ~\cite{GAP}.

\begin{proposition}\label{prop1}
Let $G$ be a finite non-abelian simple group. Then there exist two abelian subgroups $H_1$ and $H_2$ of $G$ where $\gcd (|H_1|,|H_2|)=1$, such that for any subgroup $K$ of $\Out(G)$, we have $2 \sqrt{|K|} \leq |H_i|$ and ${|K/K'|} \leq |H_i|$ for $i=1,2$.
\end{proposition}
\begin{proof}
We now go through the Classification of Finite Simple Groups.

\begin{enumerate}[I]
\item Let $G$ be one of the alternating groups $A_n, n \geq 5$. We know that $|\Out(A_n)| = 2$ except when $n=6$ and $|\Out(A_6)| = 4$. If $|\Out(A_n)| = 2$, then the result is clear since $4 \mid |A_n|$ and $5 \mid |A_n|$ when $n \geq 5$. When $n=6$, $|\Out(A_6)| = 4$ and $|A_6|= 8\cdot9\cdot5$ and the result is also clear.\\

\item Let $G$ be one of the sporadic groups or Tits group. Thus $|\Out(G)| \leq 2$ and the result can be checked by ~\cite{CFSG}.\\

For simple groups of Lie type, we go through various families of groups of Lie type. To illustrate the method, we will give a detailed analysis for $A_n(q)$ and $\leftidx{^2} A_n(q^2)$, and show how to use the results in Section 3 to handle most of the cases. For those finitely many exceptional ``small" cases left, we will check that the required inequalities hold by direct calculation. To avoid the repetition of similar arguments, for the remaining families of simple groups of Lie type, we will provide a table to list all the exceptional cases that cannot be handled by the results in Section 3, and simply claim that one can check those by direct calculation or GAP ~\cite{GAP}.\\

\item Let $G$ be of type $A_1(q)$ where $q =p^f$. We have $|G| = q(q+1)(q-1)d^{-1}$ where $d=(2,q-1)$ and $|\Out(G)| = d f$.

Case a. Suppose that $q$ is even. Then $d=1$ and $|\Out(G)|=f$.

Assume there exists a Zsigmondy prime $L_1$ for $p^{2f}-1$ and there exists a Zsigmondy prime $L_2$ for $p^f-1$. Then clearly $L_1 \mid p^{2f}-1$ and thus $L_1 \mid p^{f}+1$ and $L_1 \geq 2f$. Also $L_2 \mid p^{f}-1$ and $L_2 \geq f$. It is easy to see that $L_1 \neq L_2$. So if we take subgroups $H_i$ of order $L_i$ ($i=1,2$), then we are done.

By Zsigmondy's Theorem, the exceptional cases are the following,
\begin{enumerate}[1)]
\item $p=2$ and $f=6$. Thus $|\Out(G)|=6$. Since $2^{6}-1=9\cdot7$, and $2^{6}+1=5\cdot13$, we can pick $|H_1|=7$ and $|H_2|=13$.
\item $p=2$ and $f=3$. Thus $|\Out(G)|=3$. Since $2^{3}-1=7$, and $2^3+1=9$, we can pick $|H_1|=7$ and $|H_2|=9$.
\end{enumerate}

Case b. Suppose that $q$ is odd. Then $d=2$ and $|\Out(G)| = 2f$.

Assume there exists a Zsigmondy prime $L_1$ for $p^{2f}-1$ and there exists a Zsigmondy prime $L_2$ for $p^f-1$. Then clearly $L_1 \mid p^{2f}-1$ and thus $L_1 \mid p^{f}+1$ and $L_1 \geq 2f$. Also $L_2 \mid p^{f}-1$ and $L_2 \geq f$. It is easy to see that $L_1 \neq L_2$.

If $L_2 \geq 2f$, then the result follows. By Lemma ~\ref{feitcorprime}, the exceptional cases are the following,
\begin{enumerate}[1)]
\item $f=1$, then $q=p \geq 3$, and $|\Out(G)|=2$. Since $4 \mid |G|$ and $p \mid |G|$, we can pick $|H_1|=4$ and $|H_2|=p$.
\item $f=2$, then $q=p^2 \geq 9$, and $|\Out(G)|=4$. Since $4 \mid |G|$ and $p^2 \mid |G|$, we can pick $|H_1|=4$ and $|H_2|=p^2$.
\item $p=3$, $f=4$, and $|\Out(G)| = 8$. Since $3^4-1=16\cdot5$, and $3^4+1=2\cdot41$, $|G|$ is divisible by $41$ and $9$. We can pick $|H_1|=9$ and $|H_2|=41$.
\end{enumerate}

\bigskip

\item Let $G$ be of type $A_n(q)$ where $q=p^f$ and $n \geq 2$. Set $m=\prod_{i=1}^{n} (q^{i+1}-1)$. Then $|G| = d^{-1} q^{n(n+1)/2} m$, $|\Out(G)| = 2fd$ where $d = (n+1, q-1)$. By ~\cite[Theorem 3.2]{simplegroups/book}, we know that $\Out(G) \cong D_{2d} \times C_f$ and it is not abelian if $d \geq 3$.

We observe that when $p=2$ and $f=1$, then $q=2$, $d=1$ and $|\Out(G)| = 2$. Since $\gcd(2^{n+1}-1, 2^{n}-1) \mid 3$, the result is easy to check. Thus we may assume that $q \neq 2$.

When $q=3$ and $f=1$, then $q=3$, $d \leq 2$ and $|\Out(G)| = 4$. Since $\gcd(3^{n+1}-1, 3^{n}-1) \mid 4$, $(3^{n+1}-1)(3^{n}-1)$ is divisible by $8$ and a prime $t\geq 5$, we can pick $|H_1|=4$ and $|H_2|=t$. Thus we may assume that $q \neq 3$.

By Lemma ~\ref{feitcor}, with the exception of a finite number of cases there exists a Zsigmondy prime $L_1$ for $p^{f(n+1)}-1$ such that $L_1 \geq 2f(n+1)$ or $L_1^2 \mid p^{f(n+1)}-1$ and hence $L_1^2 \geq 2 f(n+1)$. Moreover, by Lemma ~\ref{largercorprime}, with the exception of a finite number of cases there exists a prime $L_2 \mid p^{fn}-1$, such that $L_2 \geq 3fn \geq 2f(n+1)$, or $L_2^2 \mid p^{fn}-1$ and hence $L_2^2 \geq 3fn \geq 2f(n+1)$. It is easy to see that $L_1 \neq L_2$ and the result follows.

For all the exceptional cases, we can check the results through a direct calculation and by considering the order of the group $G$ and $\Out(G)$ (see the following table (Table 1)).

\Small
\begin{table}[ht]
\caption{} 
\centering

\begin{center}
\begin{tabular}{ c c c c c c c c c}
\hline
\hline
 $p$ & $n$ & $f$ &  $d$ & $|\Out(G)|$ & $|H_1|$ & $|H_2|$\\
\hline

 $p=q$ & 2 & 1 &  $(3,q-1)$ & $2d$ & $p^2$ & divides $(p^3-1)$\\
 2 & 2 & 2 & 3 & 12, $\Out(G)$ is nonabelian & 7 & 9\\
 2 & 2 & 3 & 1 & 6 & 7 & 73\\
 2 & 3 & 2 & 1 & 4 & 7 & 17\\
 2 & 2 & 4 & 3 & 24, $\Out(G)$ is nonabelian & 13 & 17\\
 2 & 4 & 2 & 1 & 4 & 17 & 31\\
 2 & 2 & 5 & 1 & 10 & 31 & 151\\
 2 & 5 & 2 & 3 & 12 & 13 & 31\\
 2 & 2 & 6 & 3 & 36, $\Out(G)$ is nonabelian & 19 & 73\\
 2 & 3 & 4 & 1 & 8 & 17 & 257\\
 2 & 4 & 3 & 1 & 6 & 31 & 151\\
 2 & 6 & 2 & 1 & 4 & 43 & 127\\
 2 & 2 & 9 & 1 & 18 & 19 & 73\\
 2 & 3 & 6 & 1 & 12 & 19 & 73\\
 2 & 6 & 3 & 7 & 42 & 73 & 127\\
 2 & 9 & 2 & 1 & 4 & 11 & 31\\
 2 & 2 & 10 & 3 & 60 & 151 & 331\\
 2 & 4 & 5 & 1 & 10 & 11 & 31\\
 2 & 5 & 4 & 1 & 8 & 11 & 31\\
 2 & 10 & 2 & 1 & 4 & 11 & 31\\
 3 & 2 & 2 &  1 & 4 & 5 & 7\\
 3 & 2 & 3 &  1 & 6 & 7 & 13\\
 3 & 3 & 2 &  4 & 16, $\Out(G)$ is nonabelian & 13 & 41\\
 5 & 2 & 3 &  1 & 6 & 19 & 31\\
 5 & 3 & 2 &  4 & 16, $\Out(G)$ is nonabelian & 31 & 313\\
 5 & 6 & 1 & 1 & 2 & 7 & 31\\
 2 & 3 & 3 & 1 & 6 & 7 & 13\\
\end{tabular}
\end{center}

\end{table}


\Large

\item  Let $G$ be of type $\leftidx{^2} A_n(q^2)$ where $n \geq 2$. Note that if $n = 2$, then $q > 2$. Set $m = \prod_{i=1}^{n} (q^{i+1}-(-1)^{i+1})$, $q^2=p^f$ and $d = (n + 1, q + 1)$. Then $|G| = d^{-1} m q^{n(n+1)/2}$, and $|\Out(G)| = d \cdot f $. By Lemma \ref{feitcor}, with the exception of a finite number of cases, there exists a Zsigmondy prime $L_1 \mid p^{f(n+1)/2}-(-1)^{n+1}$ such that $L_1 \geq (n+1)f$, or $L_1^2 \mid p^{f(n+1)/2}-(-1)^{n+1}$ which implies that $L_1^2 \geq (n+1)f$. Moreover, by Lemma \ref{largercorprime}, with the exception of a finite number of cases, there exists a Zsigmondy prime $L_2 \mid p^{fn/2}-(-1)^{n}$ such that $L_2 \geq (n+1)f$, or $L_2^2 \mid p^{fn/2}-(-1)^{n}$ which implies that $L_2^2 \geq 3 (fn/2) \geq (n+1)f$. For all the exceptional cases, we can check the results through a direct calculation and by considering the order of the group $G$ and $\Out(G)$ (see the following table (Table 2)).

\small
\begin{table}[ht]
\caption{} 
\centering
\begin{center}
\begin{tabular}{ c c c c c c c c c}
\hline
\hline
 $p$ & $n$ & $f/2$ &  $d$ & $|\Out(G)|$ & $|H_1|$ & $|H_2|$\\
\hline

$p=q$ & 2 & 1 & $(3, p+1)$ & $2d$ & $p^2$ & Zsig. prime of $p^6-1$\\
 2 & 3 & 1 & 1 & 2 & 5 & 9\\
 2 & 2 & 2 & 1 & 4 & 5 & 13\\
 2 & 4 & 1 & 1 & 2 & 5 & 11\\
 2 & 2 & 3 & 3 & 18, $\Out(G)$ is nonabelian by GAP & 9 & 19\\
 2 & 3 & 2 & 1 & 4 & 5 & 13\\
 2 & 6 & 1 & 1 & 2 & 7 & 43\\
 2 & 2 & 4 & 1 & 8 & 17 & 241\\
 2 & 4 & 2 & 5 & 20 & 25 & 41\\
 2 & 8 & 1 & 3 & 6 & 17 & 19\\
 2 & 2 & 5 & 3 & 30 & 31 & 331\\
 2 & 5 & 2 & 1 & 4 & 5 & 7\\
 2 & 10 & 1 & 1 & 2 & 11 & 31\\
 2 & 2 & 6 & 1 & 12 & 13 & 37\\
 2 & 3 & 4 & 1 & 8 & 17 & 241\\
 2 & 4 & 3 & 1 & 6 & 11 & 13\\
 2 & 6 & 2 & 1 & 4 & 5 & 7\\
 2 & 12 & 1 & 1 & 2 & 5 & 7\\
 2 & 2 & 9 & 3 & 54 & 73 & 87211\\
 2 & 3 & 6 & 1 & 12 & 13 & 37\\
 2 & 6 & 3 & 1 & 6 & 7 & 19\\
 2 & 9 & 2 & 5 & 20 & 31 & 41\\
 2 & 18 & 1 & 1 & 2 & 7 & 19\\
 2 & 2 & 10 & 1 & 20 & 41 & 61\\
 2 & 4 & 5 & 1 & 10 & 11 & 31\\
 2 & 5 & 4 & 1 & 8 & 9 & 13\\
 2 & 10 & 2 & 1 & 4 & 11 & 31\\
 2 & 20 & 1 & 3 & 12 & 31 & 41\\
 3 & 2 & 2 & 1 & 4 & 5 & 73\\
 3 & 4 & 1 & 1 & 2 & 5 & 61\\
 3 & 2 & 3 & 1 & 6 & 7 & 13\\
 3 & 3 & 2 & 2 & 8 & 41 & 73\\
 3 & 6 & 1 & 1 & 2 & 7 & 13\\
 5 & 2 & 3 & 3 & 18 & 31 & 5167\\
 5 & 3 & 2 & 2 & 8 & 13 & 313\\
 5 & 6 & 1 & 1 & 2 & 7 & 29\\
 2 & 5 & 1 & 3 & 6 & 7 & 11\\
 2 & 9 & 1 & 1 & 2 & 11 & 31\\
 2 & 3 & 3 & 1 & 6 & 7 & 13\\
 2 & 11 & 1 & 3 & 6 & 7 & 13\\
 2 & 5 & 3 & 3 & 18 & 19 & 73\\
 2 & 8 & 2 & 1 & 4 & 5 & 13\\
 2 & 17 & 1 & 3 & 6 & 7 & 19\\
 3 & 3 & 1 & 4 & 8, $\Out(G)$ is nonabelian by GAP & 5 & 7\\
 3 & 5 & 1 & 2 & 4 & 7 & 13\\
 5 & 2 & 2 & 1 & 4 & 13 & 601\\
 5 & 5 & 1 & 6 & 12 & 31 & 521\\
\end{tabular}
\end{center}

\end{table}

\Large

We now list the table (Table 3) for the simple groups of Lie types other than $A_n(q)$ and $\leftidx{^2} A_n(q^2)$. The second and the third columns in the table are the terms where we try to find two large coprime prime divisors.\\\\\\\\

\Small
\begin{table}[ht]
\caption{Other simple groups of Lie type} 
\centering 
\begin{tabular}{c c c c} 
\hline
\hline type & $L_1$ & $L_2$ & exceptional cases \\[0.5ex] 
\hline 
$B_n(q)$, $q=p^f$ & $p^{f(2n)}-1$ & $p^{f(2n-2)}-1$ & $(f=1,n=2);(p=2,f=3,n=2);(p=2,f=1,n=3)$. \\
$C_n(q)$, $q=p^f$ & $p^{f(2n)}-1$ & $p^{f(2n-2)}-1$ & $(f=1,n=2);(p=2,f=3,n=2);(p=2,f=1,n=3)$. \\
$D_n(q)$, $q=p^f$ & $p^{f(2n-2)}-1$ & $p^{f(2n-4)}-1$ & $(p=2,f=1,n=5);(p=2,f=2,n=5);(p=2,f=1,n=8)$; \\
 &   &  & $(p=2,f=1,n=4);(p=2,f=3,n=4)$; \\
 &   &  & $(p=3,f=1,n=4);(p=5,f=1,n=4)$. \\
$\leftidx{^2}D_n(q^2)$, $q^2=p^f$ & $p^{f(n-1)}-1$ & $p^{f(n-2)}-1$ & $(p=3,f=2,n=4);(p=5,f=2,n=4)$.\\
$E_6(q)$, $q=p^f$ & $p^{12 f}-1$ & $p^{8 f}-1$  & \\
$E_7(q)$, $q=p^f$ & $p^{18 f}-1$ & $p^{14 f}-1$  & \\
$E_8(q)$, $q=p^f$ & $p^{30 f}-1$ & $p^{24 f}-1$   & \\
$F_4(q)$, $q=p^f$ &  $p^{12 f}-1$ & $p^{8 f}-1$  & \\
$G_2(q)$, $q=p^f$  & $p^{6 f}-1$ & $p^{2 f}-1$ & $(p=2,f=3)$. \\
$\leftidx{^2}E_6(q^2)$, $q^2=p^f$ & $p^{6 f}-1$ & $p^{4 f}-1$ & \\
$\leftidx{^3} D_4(q^3)$, $q^3=p^f$ & $p^{4 f}-1$ & $p^{2 f}-1$ & $(p=2,f=3)$. \\
 $\leftidx{^2}B_2(2^{2n+1})$ & $2^{4(2n+1)}-1$ & $2^{2n+1}-1$ & \\
$\leftidx{^2}F_4(2^{2n+1})$ & $2^{4(2n+1)}-1$ & $2^{2n+1}-1$ & \\
$\leftidx{^2}G_2(3^{2n+1})$  & $3^{3(2n+1)}+1$ & $3^{2n+1}-1$ & \\
[1ex] 
\hline 
\end{tabular}
\label{table:primitivesmall} 
\end{table}
\end{enumerate}

\Large

Again, the exceptional cases can be checked by direct calculations and using the order of $G$. This completes the proof.
\end{proof}

The following two corollaries follow immediately from Proposition ~\ref{prop1}.

\begin{corollary}\label{prop3}
Let $G$ be a finite non-abelian simple group and $p$ be a fixed prime. Then there exists an abelian subgroup $H$ of $G$ such that $(|H|,p)=1$ and $2 \sqrt{|\Out(G)|} \leq |H|$.
\end{corollary}

\begin{corollary}\label{prop4}
Let $G$ be a finite non-abelian simple group and $p$ be a fixed prime. Then there exists an abelian subgroup $H$ of $G$ such that $(|H|,p)=1$ and for any subgroup $K$ of $\Out(G)$, ${|K/K'|} \leq |H|$.
\end{corollary}

\begin{lemma}\label{lem11}
Assume that $X= X_1 \times \dots \times X_m$ is a direct product of finite groups $X_i$ permuted transitively by $G$ where $m > 1$. If $V$ is a $G$-invariant subgroup of $X$, then $|V: [G,V]| \leq |X|^{1/2}$.
\end{lemma}
\begin{proof}
Choose $g \in G$ that leaves no $X_i$ invariant. Then $|C_X(g)| \leq |X|^{1/2}$. Hence $|V: [G,V]| \leq |V: [g,V]| \leq |C_V(g)| \leq |C_X(g)| \leq |X|^{1/2}$.
\end{proof}

\begin{proposition}\label{prop5}
Let $p$ be a fixed prime. Assume that $N=L_1 \times \dots \times L_m$ is a direct product of $m$ isomorphic finite non-abelian simple groups. Let $G$ be a group which acts faithfully via automorphisms on $N$ such that it permutes the $L_i$s  transitively. Then there exists an abelian subgroup $H=H_1 \times \dots \times H_m$ such that $H_i < L_i$, $(|H|,p)=1$ and $|G/G'| \leq |H|$.
\end{proposition}
\begin{proof}
If $m=1$, then $G$ is isomorphic to a subgroup of $\Aut(L_1)$. There exists an abelian group $H_1 < L_1$ such that $|G:G'| \leq |H_1|$ and $(|H_1|,p)=1$ by Corollary ~\ref{prop4}.

If $m>1$, we define $K=\bigcap_i \bN_G(L_i)$. Then $K$ is a $G$-invariant subgroup of $\Out L_1 \times \dots \times \Out L_m$. Thus $|K:[G,K]| \leq |\Out(L_1)|^{m/2}$ by Lemma ~\ref{lem11}. Thus $|G/G'|=|G:G'K||K:K\cap G'| \leq 2^{m-1}|\Out(L_1)|^{m/2}$ by ~\cite[Theorem 2]{aschbacher-guralnick}. There exists an abelian subgroup $H=H_1 \times \dots \times H_m$ such that $H_i < L_i$, $(|H|,p)=1$ and $|G/G'| \leq |H|$ by Corollary ~\ref{prop3}.
\end{proof}

\section{Main Theorem} \label{section5}

In this section we finally prove Theorem \ref{thm1}, which we restate for the reader's convenience.\\

{\bf Theorem 1.1} {\it Let $G$ be a finite group and $V$ a finite faithful completely reducible $G$-module, possibly of mixed characteristic. Let $M$ be the largest orbit size in the action of $G$ on $V$. Then
\[|G/G'|\leq M.\]
}
\begin{proof}
Suppose that $G$, $V$ is a counterexample such that $|GV|$ is minimal. First observe that with the same arguments used in the proof of \cite[Theorem 2.3]{keller-yang}
we may assume that $V$ is irreducible. Let $p$ be the characteristic of $V$. Then clearly $\text{O}_p(G)=1$.\\

Let $Z=Z(E(G))$. By the proof of \cite[Lemma 3.15]{gls}
and by \cite[Lemma 3.16]{gls} we know that $F(G/Z)=F(G)/Z$, $E(G/Z)=E(G)/Z$, $F^*(G/Z)=F^*(G)/Z$,
and $C_{G/Z}(E(G/Z))=C_G(E(G))/Z$.\\

We work in $\bar{G}=G/Z$ and put $\bar{C}=C_{\bar{G}}(E(\bar{G}))=C_G(E(G))/Z$.

By Corollary ~\ref{cor3} we have

\[|\bar{C} : \bar{C}'|\leq |F(\bar{G}):F(\bar{G})'|.\]

Since $E(G)/Z$ is a direct product of simple groups, we write $E(G)/Z=E_1 \times E_2 \times \cdots \times E_n$.

Consider $K=\bar G/ \bar C$, then $K$ acts faithfully on $E(G)/Z$. We may assume that $K_0=K$ acts transitively on $L_1=E_{11} \times E_{12} \times \cdots \times E_{1m_1}$. Let $K_1=C_{K}(L_1)$. We may assume that $K_1$ acts transitively on $L_2=E_{21} \times E_{22} \times \cdots \times E_{2m_2}$ and let $K_2=C_{K_1}(L_2)$. Inductively, we may define $L_3, K_3 \dots L_t, K_t$.

Clearly $E(G)/Z=L_1 \times L_2 \times \cdots \times L_t$ and we know that $K_{i-1}/K_i$ acts transitively and faithfully on $L_i$ where $i=1, \dots, t$.

By Proposition \ref{prop5}, there exists an abelian group $A_i$ where $A_i < L_i$, $(|A_i|,p)=1$ and $|K_{i-1}/K_i:(K_{i-1}/K_i)'| \leq |A_i|$ for $1 \leq i \leq t$.

Let $\bar A=A_1 \times \dots \times A_t$, thus $\bar A$ is abelian, $(|\bar A|,p)=1$ and $|K:K'| \leq |\bar A|$.

Hence altogether we have

\[|\bar{G}:\bar{G}'|\leq |K:K'| \ |\bar{C}:\bar{C}'|\leq
|\bar{A}| \ |F(\bar{G}):F(\bar{G})'|,\ \ \ \ \ (1)\]

and as $\bar{A} F(\bar{G}) = \bar{A} \times F(\bar{G})$, we see that for $\bar{T}=\bar{A} F(\bar{G})$ that

\[ |\bar{G}:\bar{G}'|\leq |\bar{T}:\bar{T}'|.\]

Now let $Z\leq T\leq G$ and $Z\leq A\leq G$ be the inverse images of $\bar{T}$ and $\bar{A}$, respectively.

Then $T=AF(G)$ is a central product of $A$ and $F(G)$ with common central subgroup $Z$; write
$T=A \Ydown_Z F(G)$. \\

Let $Z_1=A'\leq Z$, $Z_2=F(G)'\cap Z$, and $Z_0=Z_1\cap Z_2$. Then

\[T'= Z_1 \Ydown_{Z_0} F(G)'\]
and thus

\[|T:T'| = \frac{ \frac{|A|\ |F(G)|}{|Z|}}{\frac{|Z_1| \ |F(G)'|}{|Z_0|}}
        = |A/Z|\  |F(G):F(G)'|\ \frac{1}{|Z_1:Z_0|}.\]

Now observe that $Z_1\leq Z\cap G'$ and (by definition) $Z_2=Z\cap F(G)'$. Thus $Z_1Z_2\leq Z\cap G'$, and so

\[Z_1/Z_0\ =\ Z_1/(Z_1\cap Z_2)\ \cong\ Z_1Z_2/Z_2\ \leq\ (Z\cap G')/(Z\cap F(G)').\]

Therefore

\[|T:T'| \ \geq\ |A/Z|\  |F(G):F(G)'|\ \frac{|Z\cap F(G)'|}{|Z\cap G'|}.\ \ \ \ (2)\]

Now from (1) we get
\begin{eqnarray*}
|G:G'Z|& =&|(G/Z):(G/Z)'|\ =\ |\bar{G}:\bar{G}'|\\
&\leq &|\bar{A}|  |F(\bar{G}):F(\bar{G})'|\\
&= &|A/Z|  |(F(G)/Z):(F(G)/Z)'|\\
&= &|A/Z|  |F(G):F(G)'Z|\\
&= &|A/Z|  \frac{|F(G)| \ |Z\cap F(G)'|}{|F(G)'|\ |Z|}\\
&= &|A/Z|  |F(G):F(G)'|\frac{|Z\cap F(G)'|}{|Z|}.\ \ \ \ \ (3)\\
\end{eqnarray*}

Finally, combining (2) and (3) we have

\begin{eqnarray*}
|G:G'|& =&|G:G'Z|\ |G'Z:Z|\\
&= &|G:G'Z|\ |Z:(Z\cap G')|\\
&\leq  &|A/Z|  |F(G):F(G)'|\ \frac{|Z\cap F(G)'|}{|Z|}\ \frac{|Z|}{|Z\cap G'|}\\
&\leq & |T:T'|\\
\end{eqnarray*}
and $p$ does not divide $|T|$.

We note that $T$ is a solvable group that acts faithfully on $V$ and $(|T|,p)=1$. It is clear that $T$ acts completely reducibly on $V$. Thus $|G:G'| \leq |T:T'|\leq M$ by ~\cite[Theorem 1.1]{keller-yang}.
\end{proof}

\centerline{\bf Acknowledgements \rm}


The project was supported by NSFC (Nos: 11671063), a grant from the Simons Foundation (\#280770 to Thomas M. Keller), and a grant from the Simons Foundation (\#499532 to Yong Yang). The authors are also grateful to the referee for his/her valuable suggestions and comments.



\medskip

\end{document}